\newtheorem{thm}{Theorem}
\newtheorem{prop}[thm]{Proposition}
\newtheorem{lem}[thm]{Lemma}
\newtheorem{cor}[thm]{Corollary}
\theoremstyle{remark}
\newtheorem{ex}{Example}
\theoremstyle{definition}
\newcommand{\C}{\mathbb{ C}}
\newcommand{\Q}{\mathbb{ Q}}
\newcommand{\R}{\mathbb{ R}}
\newcommand{\Z}{\mathbb{ Z}}
\title{Pontryagin numbers and nonnegative curvature}
\author{D.~Kotschick}
\address{Mathematisches Institut, {\smaller LMU} M\"unchen,
Theresienstr.~39, 80333~M\"unchen, Germany}
\email{dieter@member.ams.org}
\thanks{The author gratefully acknowledges the support of The Bell Companies Fellowship at the Institute for Advanced Study in Princeton.}
\date{\today; \copyright{\ D.~Kotschick 2009}}
\subjclass[2000]{primary 53C21, 57R20; secondary 53C23, 57R75}
\begin{document}

\begin{abstract}
We prove that any rational linear combination of Pontryagin numbers that is not a multiple of the signature is unbounded on 
connected closed oriented manifolds of nonnegative sectional curvature. 
Combining our result with Gromov's finiteness result for the signature yields a new characterization of the $L$-genus.
\end{abstract}

\maketitle


\section{Introduction}

This paper is concerned with the boundedness of Pontryagin numbers, and of their linear combinations, for
connected closed oriented manifolds of nonnegative sectional curvature. By Thom's results~\cite{T} on cobordism 
theory, the signature is a rational linear combination of Pontryagin numbers. The following is our main result:
\begin{thm}\label{t:main}
Any rational linear combination of Pontryagin numbers that is not a multiple of the signature 
is unbounded on connected closed oriented manifolds of nonnegative sectional curvature. 
\end{thm}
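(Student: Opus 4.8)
The plan is to pass to oriented cobordism and dualise. In dimension $4k$ the rational Pontryagin numbers are exactly the linear functionals on $\Omega^{SO}_{4k}\otimes\Q$, which by Thom is the degree-$4k$ part of the polynomial ring $\Q[\C P^2,\C P^4,\C P^6,\dots]$. The signature is a ring homomorphism with $\sigma(\C P^{2j})=1$, so it takes the value $1$ on every monomial in the generators; consequently $\sigma(\sum a_i m_i)=\sum a_i$ on any rational combination of such monomials $m_i$, and a combination $L$ is a multiple of $\sigma$ if and only if $L$ vanishes on the codimension-one subspace $\ker\sigma\subset\Omega^{SO}_{4k}\otimes\Q$ spanned by the differences $m_i-m_j$. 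I would use alongside this the complementary fact that $|\sigma(M)|\le b_{2k}(M)\le C(4k)$ for nonnegatively curved $M$, which is Gromov's bound. Thus it suffices to construct families of nonnegatively curved $4k$-manifolds whose cobordism classes diverge in $\Omega^{SO}_{4k}\otimes\Q$ and to arrange the resulting leading directions to span $\ker\sigma$: then any $L$ with $L|_{\ker\sigma}\neq0$ is nonzero on one of them and hence unbounded.

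For the geometric engine I would projectivise bundles. Fix the $(2k-1)$-dimensional irreducible representation $\Sym^{2k-2}$ of $SU(2)$, and for $j\in\pi_3(SU(2))=\Z$ let $P_j\to S^4$ be the principal $SU(2)$-bundle of Euler number $j$; its total space is an $S^3$-bundle over $S^4$ and so carries an invariant metric of nonnegative curvature by the cohomogeneity-one construction of Grove and Ziller, which applies to every $S^3$-bundle over $S^4$. Giving the fibre $\C P^{2k-2}$ a Fubini--Study metric, on which $SU(2)$ acts by isometries through $\Sym^{2k-2}$, the associated bundle
\[
E_j\;=\;P_j\times_{SU(2)}\C P^{2k-2}\;=\;\bigl(P_j\times\C P^{2k-2}\bigr)/SU(2)
\]
is the quotient of the nonnegatively curved product $P_j\times\C P^{2k-2}$ by a free isometric $SU(2)$-action, and is therefore nonnegatively curved by O'Neill's formula. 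Its dimension is $4k$, its Betti numbers are bounded independently of $j$ by Leray--Hirsch, and, writing $E_j=\PP(V_j)$ for the associated rank-$(2k-1)$ vector bundle $V_j$ with $c_2(V_j)$ a nonzero multiple of $j$, the Borel--Hirzebruch formula for the Pontryagin classes of a projective bundle shows that the Pontryagin numbers of $E_j$ are polynomials of positive degree in $j$. Hence $[E_j]$ is an unbounded, vector-valued polynomial in $j$; its leading coefficient $w$ is nonzero, and since $\sigma([E_j])$ stays bounded by Gromov the polynomial $\sigma([E_j])$ is constant, so $w\in\ker\sigma$.

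To handle every $L\not\propto\sigma$ simultaneously I would enlarge the stock of leading directions without any new curvature input, by taking products: products of several bundles of this type (with varying $k$ and varying representation) and of fixed Fubini--Study spaces $\C P^{2j}$ are again nonnegatively curved, and the leading coefficient of such a product family is, by the Leibniz rule, the product of the velocity $w$ of the varying factor with a monomial in the generators. Multiplying a single nonzero $w$ by all monomials of complementary degree thus produces a whole collection of signature-zero classes, and the aim is to choose the factors so that these span $\ker\sigma$. Granting this spanning, any $L$ with $L|_{\ker\sigma}\neq0$ pairs nontrivially with some product velocity, so the associated nonnegatively curved family realises $|L|\to\infty$.

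The main obstacle is the second, combinatorial, half: verifying that the leading directions of nonnegatively curved projective bundles actually span the full hyperplane $\ker\sigma$. This is not formal---it requires computing the cobordism classes $[E_j]$ through their characteristic numbers and then a rank count against the partition basis of $\Omega^{SO}_{4k}\otimes\Q$, and it is precisely here that the polynomial-ring structure on the generators $\C P^{2j}$ must be exploited. The curvature side, by contrast, is comparatively soft once the construction is set up, since the description $E_j=(P_j\times\C P^{2k-2})/SU(2)$ reduces nonnegative curvature to the Grove--Ziller metric on the $S^3$-bundle $P_j$ together with O'Neill's submersion formula.
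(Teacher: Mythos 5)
Your architecture is the right one and your examples are essentially the paper's (projectivised bundles associated to principal $SU(2)$-bundles over $S^4$, made nonnegatively curved via Grove--Ziller plus the submersion inequality, then fed into the dual pairing between Pontryagin numbers and $\Omega^{SO}_{4k}\otimes\Q$). But the proof is not complete: the step you yourself flag as ``the main obstacle'' --- showing that the leading directions of these families, together with their products with fixed factors, span the hyperplane $\ker\sigma$ --- is exactly the content of the theorem, and you leave it as an unverified ``aim.'' Without it the argument only shows that \emph{some} linear combinations not proportional to $\sigma$ are unbounded, namely those that happen to pair nontrivially with whatever leading directions you have produced.

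The missing idea that closes this gap is Thom's criterion for polynomial generators. Writing $X_c$ for the $\C P^{2k}$-bundle with $c_2=c\cdot x$, one computes the Thom number $s_{k+1}(X_c)=-(2k+1)(2k+3)\,c\neq 0$ for $c\neq 0$ (a short exercise with Grothendieck's relation $y^{2k+1}+cxy^{2k-1}=0$ and the splitting $c(T\pi)=(1+y)^{2k-1}(1+y_1)(1+y_2)$). By Thom, $s_i\neq 0$ means $[X_c]$ may itself be taken as the degree-$4i$ polynomial generator of $\Omega_{\star}\otimes\Q$. Once the generators are $\alpha_1=[\C P^2]$ and $\alpha_i=[X_c]$ for $i\geq 2$, the monomial basis $\{\alpha_I\}$ of $\Omega_{4i}\otimes\Q$ contains exactly one element of nonzero signature, namely $[\C P^2]^i$, and all the others lie in $\ker\sigma$ because each contains an $X_c$-factor of zero signature and $\sigma$ is multiplicative; so they \emph{automatically} form a basis of $\ker\sigma$ and no rank count against the $\C P^{2j}$-basis is needed. (Your route of choosing the irreducible representation $\Sym^{2k-2}$ would force you back into the Borel--Hirzebruch Lie-theoretic weight computation to verify $s\neq 0$; the block inclusion $SU(2)\subset SU(2k+1)$ keeps everything elementary.) The endgame is then also cleaner than a ``leading coefficient'' analysis: since every Pontryagin number of $X_c$ is a universal multiple of $c$, any $f$ not proportional to $\sigma$ is nonzero on some mixed monomial $\alpha_I$, where it equals a nonzero multiple of $c^n$ with $n\geq 1$, hence is unbounded as $c$ varies. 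I would also note that your appeal to Gromov's Betti number bound to deduce $\sigma(E_j)$ is constant is an unnecessarily heavy input: the signature of $X_c$ vanishes for the elementary reason that $xy^{k-1}$ is isotropic in the rank-two middle cohomology, and keeping Gromov out of the proof of the theorem is what makes the resulting characterization of the $L$-genus independent of the signature theorem.
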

Combined with Gromov's celebrated Betti number bound~\cite{G}, which implies a bound on the signature,
this theorem gives the following differential-geometric characterization of the signature:
\begin{cor}\label{c}
A rational linear combination of Pontryagin numbers is bounded on connected closed oriented manifolds of 
nonnegative sectional curvature if and only if it is a multiple of the signature. 
\end{cor}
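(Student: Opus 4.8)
The plan is to prove Theorem~\ref{t:main} by exhibiting, for each normalized Pontryagin number other than the signature, an explicit sequence of nonnegatively curved manifolds on which that characteristic number grows without bound. The strategy rests on the fact that nonnegative sectional curvature is preserved under Riemannian products: if $M_1$ and $M_2$ carry metrics of nonnegative sectional curvature, then so does $M_1 \times M_2$, and the Pontryagin classes multiply via the Whitney sum formula $p(M_1 \times M_2) = p(M_1) \cdot p(M_2)$. Taking high Cartesian powers of a fixed building block therefore produces manifolds whose Pontryagin numbers grow polynomially in the number of factors, while nonnegative curvature is retained throughout.

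\medskip

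First I would reduce the problem to a statement about the linear span of products of lower-dimensional Pontryagin numbers. A rational linear combination of Pontryagin numbers is a linear functional on the rational oriented cobordism ring $\Om^{SO}_* \otimes \bQ$, which by Thom's theorem is a polynomial algebra on the classes of the complex projective spaces $\cp, \C P^4, \C P^6, \dots$. The signature is the unique (up to scale) such functional that is multiplicative and agrees with the $L$-genus; the task is to show that any functional \emph{linearly independent} from the signature is unbounded. The key is that the homogeneous symmetric building blocks $\cp$ and $\quadric$, together with their products, already generate enough Pontryagin numbers: $\cp$ has positive curvature, $\quadric$ has nonnegative (indeed positive) curvature, and products of these and of $\C P^{2k}$ carry nonnegative curvature. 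I would argue that in each dimension the Pontryagin numbers realized by such nonnegatively curved products span a subspace large enough that any functional vanishing on the signature direction can be made arbitrarily large on some product.

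\medskip

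The main step is a calculation showing growth. Given a characteristic number $\Phi = \sum c_\om \, p_\om$ that is not a multiple of $\si$, I would find a partition-indexed monomial $p_\om$ on which $\Phi$ and $\si$ have linearly independent values, then evaluate $\Phi$ on the family of products $(\cp)^{a} \times (\quadric)^{b} \times N^c$ for a suitable fixed nonnegatively curved seed $N$, letting $a,b,c \to \infty$. Because the signature is multiplicative, $\si$ of such a product is a \emph{product} of the signatures of the factors and hence bounded in terms of the total signature contribution, whereas a non-signature functional picks up cross-terms in the Whitney expansion that grow faster. Concretely, I would exploit that $p_1^n[\,\cdot\,]$ and the other decomposable Pontryagin numbers scale differently under taking powers than the $L$-genus does, so that on a well-chosen direction in the space of partitions the value of $\Phi$ is a polynomial in $a,b,c$ of positive degree while the constraint fixes only the signature combination.

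\medskip

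The hard part will be the combinatorial bookkeeping that guarantees unboundedness \emph{whenever} $\Phi$ is not proportional to $\si$, rather than merely for some convenient $\Phi$: one must show that the space of bounded functionals is \emph{exactly} one-dimensional. I expect to handle this by identifying the span of Pontryagin numbers of nonnegatively curved products inside each graded piece of $\Hom(\Om^{SO}_*\otimes\bQ, \bQ)$ and proving that its annihilator is precisely the line spanned by the signature; equivalently, that the closure (in the sense of boundedness) is cut out by the single Hirzebruch relation. Once unboundedness of every non-signature functional is established, Corollary~\ref{c} follows immediately, since Gromov's Betti number bound~\cite{G} forces the signature itself to be bounded, so the bounded functionals are exactly the multiples of $\si$.
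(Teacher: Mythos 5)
Your final deduction of the corollary is the same as the paper's: once Theorem~\ref{t:main} is known, Gromov's Betti number bound handles the signature direction and the equivalence follows. The problem is with your proposed proof of Theorem~\ref{t:main} itself, which has a fatal structural gap. A linear combination of Pontryagin numbers lives in a \emph{fixed} dimension $4i$, so ``unbounded'' means unbounded on an infinite sequence of nonnegatively curved $4i$-manifolds. Your only source of examples is products of fixed building blocks ($\C P^2$, $S^2\times S^2$, the $\C P^{2k}$, and a ``fixed seed $N$''), and in any fixed dimension there are only \emph{finitely many} such products; every characteristic number is trivially bounded on a finite set. Letting $a,b,c\to\infty$ in $(\C P^2)^a\times(S^2\times S^2)^b\times N^c$ changes the dimension, so it says nothing about unboundedness of a functional in dimension $4i$. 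Relatedly, the claim that the signature of a product ``is bounded whereas a non-signature functional picks up cross-terms that grow faster'' does not parse: for a fixed manifold all characteristic numbers are fixed rational numbers, and there is no growth to compare.

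The missing idea, which is the heart of the paper, is an \emph{infinite family} of nonnegatively curved manifolds within each fixed dimension. The paper takes the projectivisations $X_c$ of rank-$(2k+1)$ complex vector bundles over $S^4$ with $c_2=c\cdot x$: these are nonnegatively curved for every $c\in\Z$ by Grove--Ziller, have vanishing signature, and have every Pontryagin number equal to a universal nonzero multiple of $c$ (with $s_{k+1}(X_c)=-(2k+1)(2k+3)c\neq 0$, so they can serve as polynomial generators of $\Omega_*\otimes\Q$ in place of $\C P^{2k+2}$). One then chooses the basis $\alpha_I$ of $\Omega_{4i}\otimes\Q$ built from $\C P^2$ and the $X_c$; all basis elements except $[\C P^2]^{\times i}$ lie in $\ker L_i$, so any $f$ not proportional to $L_i$ is nonzero on some $\alpha_I$ involving at least one $X_c$ factor, and that value is a nonzero multiple of a positive power of $c$, hence unbounded as $c$ varies. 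Without some such one-parameter family of nonnegatively curved, signature-zero manifolds in each dimension, your scheme cannot produce unboundedness, and no amount of combinatorial bookkeeping over products of homogeneous spaces will repair it.
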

This corollary bears some resemblance to the characterization of the $\hat A$-genus as the only 
linear combination of Pontryagin numbers that vanishes on all closed oriented spin manifolds of positive 
scalar curvature; see Gromov--Lawson~\cite[Corollary B]{GL}. 
Nevertheless, the two statements are quite different. On the one hand, positive scalar curvature manifolds define
an ideal in the spin cobordism ring, and this ideal is identified with the kernel of the $\hat A$-genus using
the Atiyah--Singer theorem and the Lichnerowicz argument. On the other hand, to admit a metric of nonnegative sectional curvature 
is not a cobordism property, and manifolds with such a metric do not define an ideal in the oriented cobordism group.
Furthermore, the signature of nonnegatively curved manifolds does not always vanish, as shown by the classical
examples like $\C P^{2k}$, but the boundedness of the signature follows from a ``soft'' argument~\cite{G}. Of course,
to obtain meaningful statements about the boundedness of a nonvanishing cobordism invariant one has to 
restrict to connected manifolds.

Setting aside the curvature condition in Theorem~\ref{t:main} and Corollary~\ref{c}, we obtain a purely
topological result:
\begin{cor}\label{c2}
A rational linear combination of Pontryagin numbers can be bounded in terms of Betti numbers if and only if it is a multiple of the signature. 
\end{cor}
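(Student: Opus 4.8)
The plan is to deduce this corollary from Theorem~\ref{t:main} together with Gromov's Betti number bound~\cite{G}, proving the two implications separately. First I would fix the dimension: since Pontryagin numbers are defined only on closed oriented manifolds of dimension divisible by $4$, a nontrivial rational linear combination $L$ of Pontryagin numbers lives in a single fixed dimension $4k$, and throughout the argument $M$ ranges over connected closed oriented $4k$-manifolds, with $L(M)$ denoting the corresponding characteristic number.

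For the easy implication, suppose $L = c\cdot\sigma$ is a multiple of the signature. Then for every such $M$ one has $|L(M)| = |c|\,|\sigma(M)| \le |c|\, b_{2k}(M)$, since the signature is the difference of the numbers of positive and negative eigenvalues of the intersection form on the $b_{2k}$-dimensional space $H^{2k}(M;\R)$. Thus $L$ is dominated by a linear function of the Betti numbers, so it is bounded in terms of Betti numbers.

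For the substantive implication I would argue contrapositively. Assume $L$ is not a multiple of the signature. By Theorem~\ref{t:main} there is a sequence $M_j$ of connected closed oriented $4k$-manifolds admitting metrics of nonnegative sectional curvature with $|L(M_j)| \to \infty$. The crucial input is Gromov's theorem~\cite{G}, which bounds the total Betti number $\sum_i b_i(M_j;F)$ of each $M_j$ by a constant $C(4k)$ depending only on the dimension, and in particular not on $j$. Hence $L$ attains arbitrarily large values on a family of manifolds whose Betti numbers are all bounded by the single constant $C(4k)$, so $L$ cannot be bounded by any function of the Betti numbers. This establishes the contrapositive and completes the proof.

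The one place where genuine content enters is the passage from ``unbounded on nonnegatively curved manifolds'' to ``unbounded in terms of Betti numbers,'' which relies entirely on the \emph{uniform}, dimension-only character of Gromov's bound; the examples produced by Theorem~\ref{t:main} are nonnegatively curved precisely so that this ceiling applies. Once that uniform ceiling is in hand, Corollary~\ref{c2} follows with no further work, its point being that the same sequence of examples is already detected by a purely topological constraint.
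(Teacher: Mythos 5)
Your proof is correct and follows exactly the route the paper intends: the paper states Corollary~\ref{c2} as an immediate consequence of Theorem~\ref{t:main} together with Gromov's uniform, dimension-only bound on total Betti numbers for nonnegatively curved manifolds, with the converse direction supplied by the elementary estimate $|\sigma(M)|\le b_{2k}(M)$. Your write-up simply makes explicit the two implications the paper leaves to the reader.
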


For the proof of Theorem~\ref{t:main} we shall construct sequences of ring generators for the rational cobordism ring 
$\Omega_{\star}\otimes\Q$ with the property that each generator of dimension $\geq 8$ belongs to a family of possible choices 
on which a certain indecomposabe Pontryagin number is unbounded. These families of examples are $\C P^{2k}$-bundles 
over $S^4$, which have metrics of nonnegative sectional curvature by the results of Grove and Ziller~\cite{GZ}. The case of 
$\C P^2$-bundles was previously considered by Dessai and Tuschmann~\cite{DT}, who essentially proved Theorem~\ref{t:main} 
in dimension $8$. More generally, they proved that in every dimension of the form $4n>4$ there is some Pontryagin number 
that is unbounded on connected manifolds of nonnegative sectional curvature.
Following Borel and Hirzebruch~\cite{BH}, Dessai and Tuschmann~\cite{DT} used Lie theory to calculate a Pontryagin
number by what Weil is said to have called ``digging roots and lifting weights''. In contrast with this, we do not use Lie theory
in this paper. All our calculations are straightforward applications of Grothendieck's 
definition of Chern classes and of Hirzebruch's splitting principle. These calculations actually give a constructive 
proof of Corollary~\ref{c}, allowing us to determine the signature as a linear combination of Pontryagin numbers.
An interesting feature of this argument is that it is independent of the Hirzebruch signature theorem. We shall elaborate on
this fact in the final section of the paper.

\section{Projective space bundles over the four-sphere}

We consider complex vector bundles $E\longrightarrow S^4$ of rank $2k+1$ with $k\geq 1$.
These bundles are classified by their second Chern classes $c_2(E)\in H^4(S^4)$. Fixing
an orientation class $x\in H^4(S^4)$, we write $c_2(E)=c\cdot x$ with $c\in\Z$.

Every rank $(2k+1)$-bundle $E$ over $S^4$ is isomorphic to the direct sum of a rank $2$
bundle with the same Chern class and a trivial bundle of rank $2k-1$. Therefore, the structure group of $E$
reduces to $SU(2)$. In other words, $E$ is associated to a principal $SU(2)$-bundle $P\longrightarrow S^4$.

Let $X=X_c$ be the projectivisation of a vector bundle $E_c$ with Chern class $c_2(E)=c\cdot x$. This is a
$\C P^{2k}$-bundle $\pi\colon X_c\longrightarrow S^4$, also with structure group $SU(2)$. Thus
$$
X_c = (P_c \times \C P^{2k})/SU(2) \ ,
$$
where $P_c$ is the principal $SU(2)$-bundle over $S^4$ with second Chern number $c$. Here $SU(2)$
acts on $\C P^{2k}$ via the block inclusion $SU(2)\subset SU(2k+1)$.
It was proved by Grove and Ziller~\cite{GZ} that $P_c$ admits an $SU(2)$-invariant metric of nonnegative 
sectional curvature. The non-decreasing property of curvature in submersions implies that the product of the 
Grove--Ziller metric on $P_c$ with the Fubini--Study metric on $\C P^{2k}$ induces a nonnegatively curved
metric on $X_c$. Thus we have the following:
\begin{thm}\label{t:GZ}{\rm (cf.~\cite{GZ})}
The projectivisation $X_c$ of any complex vector bundle over $S^4$ admits a Riemannian metric of nonnegative sectional curvature.
\end{thm}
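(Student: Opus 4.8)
The plan is to exhibit $X_c$ as the base of a Riemannian submersion whose total space carries a manifestly nonnegatively curved metric, and then to invoke the fact that sectional curvature does not decrease under Riemannian submersions. The structural input is already in place: by the discussion above we have the identification
$$
X_c = (P_c \times \C P^{2k})/SU(2) \ ,
$$
where $SU(2)$ acts diagonally, acting on the fibre $\C P^{2k}$ through the block inclusion $SU(2)\subset SU(2k+1)$.

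First I would fix an $SU(2)$-invariant metric $g_P$ of nonnegative sectional curvature on $P_c$; its existence is precisely the Grove--Ziller theorem~\cite{GZ}, which I take as given. On the fibre I would take the Fubini--Study metric $g_{FS}$, which has nonnegative sectional curvature and, crucially, is preserved by the unitary action of $SU(2k+1)$ and hence by the subgroup $SU(2)$. The product metric $g_P \times g_{FS}$ on $P_c \times \C P^{2k}$ then has nonnegative sectional curvature, and it is invariant under the diagonal $SU(2)$-action since each factor is invariant separately.

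Next I would check that the diagonal action is free, so that the construction actually produces a Riemannian submersion. Freeness is immediate: $SU(2)$ acts freely on the principal bundle $P_c$, hence freely on the product. Therefore the quotient projection $P_c \times \C P^{2k}\longrightarrow X_c$ is a principal $SU(2)$-bundle, and the invariant metric $g_P \times g_{FS}$ descends to the unique metric on $X_c$ for which this projection is a Riemannian submersion. Applying the O'Neill formula, the sectional curvatures of $X_c$ along horizontal two-planes are bounded below by the corresponding curvatures upstairs, which are nonnegative; hence $X_c$ carries a metric of nonnegative sectional curvature.

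The argument is entirely soft and involves no computation. The one point that genuinely has to be verified is that the product metric is invariant under the diagonal action and descends, which reduces to the observations that $SU(2)$ acts by isometries on each factor and that its action on $P_c$ is free. The substantive content lies outside this step, in the Grove--Ziller construction of an invariant nonnegatively curved metric on $P_c$; granting that input, Theorem~\ref{t:GZ} follows from the standard non-decreasing property of curvature in Riemannian submersions.
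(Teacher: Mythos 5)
Your proposal is correct and is exactly the argument the paper gives: take the Grove--Ziller $SU(2)$-invariant nonnegatively curved metric on $P_c$, form the product with the Fubini--Study metric on $\C P^{2k}$, and let it descend through the free isometric diagonal action, using the non-decreasing property of curvature in Riemannian submersions. Your write-up merely makes explicit the freeness and invariance checks that the paper leaves implicit.
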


The cohomology ring of $X_c$ is described by the following:
\begin{prop}\label{p:coho}
The cohomology ring of $X_c$ is generated by two classes $x\in H^4(X_c)$ and $y\in H^2(X_c)$ subject to the 
relations $x^2=0$ and $y^{2k+1}+cxy^{2k-1}=0$. The signature of $X_c$ vanishes.
\end{prop}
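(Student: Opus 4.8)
The plan is to obtain the ring structure from the projective bundle (Leray--Hirsch) theorem and then read the signature directly off the resulting intersection form. First I would introduce $y = c_1(\OO_{X_c}(1)) \in H^2(X_c)$, the first Chern class of the dual tautological line bundle along the fibres, and write $x$ also for its pullback $\pi^*x \in H^4(X_c)$. Since $y$ restricts on each fibre $\C P^{2k}$ to the hyperplane generator, the classes $1, y, \dots, y^{2k}$ restrict to a basis of $H^*(\C P^{2k})$; Leray--Hirsch then shows that $H^*(X_c)$ is a free module over $H^*(S^4) = \Z[x]/(x^2)$ on this basis. In particular $x$ and $y$ generate the ring.

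The two relations follow next. The relation $x^2 = 0$ is automatic, since $x^2 = \pi^*(x^2)$ and $x^2 \in H^8(S^4) = 0$. For the second relation I would appeal to Grothendieck's defining relation $\sum_{i=0}^{2k+1} \pi^*(c_i(E_c))\, y^{2k+1-i} = 0$. The Chern classes of $E_c$ are nearly all zero for dimensional reasons: $c_1(E_c) \in H^2(S^4) = 0$, and $c_i(E_c) \in H^{2i}(S^4) = 0$ for $i \geq 3$, while $c_2(E_c) = c\,x$ by the very definition of $c$. Hence the relation collapses to $y^{2k+1} + c\,x\,y^{2k-1} = 0$, as stated.

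For the signature I would compute the intersection form by hand. As $X_c$ has real dimension $4(k+1)$, the relevant group is $H^{2(k+1)}(X_c)$; using $x^2=0$ the only monomials of this degree are $y^{k+1}$ and $x\,y^{k-1}$, and since $k+1 \leq 2k$ these are independent in the free $H^*(S^4)$-module, so the group has rank $2$. Fixing the orientation so that $x\,y^{2k}$ is the positive generator of the top group, I would reduce each product via the two relations: $(y^{k+1})^2 = y\cdot y^{2k+1} = -c\,x\,y^{2k}$, then $(y^{k+1})(x\,y^{k-1}) = x\,y^{2k}$, and $(x\,y^{k-1})^2 = x^2 y^{2k-2}=0$. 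The intersection matrix is therefore $\left(\begin{smallmatrix}-c & 1\\ 1 & 0\end{smallmatrix}\right)$, with determinant $-1$; having one positive and one negative eigenvalue, it has signature $0$.

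I do not anticipate a genuine obstacle: the projective bundle theorem supplies all the structural input, and everything after is bookkeeping. The only points requiring care are the conventions --- choosing $y$ so that the Grothendieck relation appears with $+c$ rather than $-c$, and normalising the fundamental class so that $x\,y^{2k}$ pairs to $+1$. I note in passing that the vanishing of the signature would also follow instantly from Chern--Hirzebruch--Serre multiplicativity, $\sigma(X_c) = \sigma(\C P^{2k})\,\sigma(S^4) = 1\cdot 0 = 0$, the base $S^4$ being simply connected; but the direct computation above keeps the argument elementary and self-contained, in keeping with the aims stated in the introduction.
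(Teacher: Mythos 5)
Your proposal is correct and follows essentially the same route as the paper: Leray--Hirsch plus the Grothendieck relation for the ring structure, and a direct analysis of the rank-two intersection form on $H^{2k+2}(X_c)$ for the signature. The only cosmetic difference is that you write out the full intersection matrix $\left(\begin{smallmatrix}-c & 1\\ 1 & 0\end{smallmatrix}\right)$, whereas the paper merely observes that $xy^{k-1}$ spans an isotropic line, so the form is indefinite of rank $2$.
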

\begin{proof}
Consider the fibration $\pi\colon X_c\longrightarrow S^4$. By the Leray--Hirsch theorem, $\pi^*$ is injective in
cohomology, and we identify $x$ and $\pi^*x$. Clearly $x^2=0$.

The cohomology ring $H^*(X_c)$ is generated as a $H^*(S^4)$-module by a class $y\in H^2(X_c)$ which restricts 
to every fiber as a generator of $H^2(\C P^{2k})$.
Grothendieck's definition of Chern classes defines $c_2(E_c)$, which by assumption equals $cx$, by the 
equation $y^{2k+1}+c_2(E_c)y^{2k-1}=0$.

Now, the middle-dimensional cohomology $H^{2k+2}(X_c)$ is spanned by $y^{k+1}$ and $xy^{k-1}$. The latter
has square zero because $x^2=0$, and so spans an isotropic subspace for the intersection form. Thus the 
intersection form is indefinite of rank $2$. This implies the vanishing of the signature.
\end{proof}

To compute the Pontryagin classes of $X_c$ note that $TX_c=T\pi\oplus\pi^*TS^4$, where $T\pi$ is the tangent
bundle along the fibers. As the tangent bundle of $S^4$ is stably trivial, the Pontryagin classes of $X_c$ are the 
Pontryagin classes of $T\pi$. This latter bundle actually has a complex structure, and we can write down its Chern
classes:
\begin{prop}\label{p:Chern}
The total Chern class of $T\pi$ is $c(T\pi)=(1+y)^{2k+1}+cx(1+y)^{2k-1}$.
\end{prop}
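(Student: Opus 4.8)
The plan is to realise $T\pi$ as the relative tangent bundle of the projective bundle $\pi\colon X_c\to S^4$ and to compute its Chern class from the relative Euler sequence. Writing $\OO(1)$ for the dual of the tautological line bundle on $X_c=\PP(E_c)$, whose first Chern class I take to be the generator $y\in H^2(X_c)$ of Proposition~\ref{p:coho}, the starting point is the fibrewise Euler sequence
$$0\lra\underline{\C}\lra\pi^*E_c\otimes\OO(1)\lra T\pi\lra 0\ ,$$
whose restriction to each fibre $\C P^{2k}$ is the classical Euler sequence, and which arises by tensoring the tautological sequence $0\to\OO(-1)\to\pi^*E_c\to Q\to 0$ by $\OO(1)$ and identifying $Q\otimes\OO(1)$ with $T\pi$. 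Since the total Chern class is multiplicative in short exact sequences and $c(\underline{\C})=1$, this immediately gives $c(T\pi)=c(\pi^*E_c\otimes\OO(1))$, so the entire computation reduces to the Chern class of a tensor product of $\pi^*E_c$ with a line bundle.

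For the second step I would apply Hirzebruch's splitting principle. Writing the Chern roots of $E_c$ formally as $a_1,\dots,a_{2k+1}$, the Chern roots of $\pi^*E_c\otimes\OO(1)$ are $a_i+y$, and expanding the product and regrouping by the elementary symmetric functions of the $a_i$ yields
$$c(T\pi)=\prod_{i=1}^{2k+1}(1+a_i+y)=\sum_{i=0}^{2k+1}\pi^*c_i(E_c)\,(1+y)^{2k+1-i}\ .$$
It then remains only to insert the Chern classes of $E_c$. Because $H^j(S^4)$ vanishes for $j\neq 0,4$, the only nonzero Chern classes of $E_c$ are $c_0=1$ and $c_2(E_c)=cx$; every term with $i=1$ or $i\geq 3$ drops out, leaving exactly $(1+y)^{2k+1}+cx(1+y)^{2k-1}$, as claimed.

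The only delicate point, and the place where I expect errors could creep in, is the bookkeeping of conventions: one must decide whether $\PP(E_c)$ parametrises lines or hyperplanes, hence whether $\OO(1)$ or $\OO(-1)$, and $E_c$ or its dual $E_c^\vee$, enter the Euler sequence and Grothendieck's relation. Fortunately this ambiguity is harmless here. Since $c_1(E_c)=0$ one has $c_2(E_c^\vee)=c_2(E_c)$, so both conventions produce the same class $y$ satisfying the same relation $y^{2k+1}+cxy^{2k-1}=0$ of Proposition~\ref{p:coho}, and the final formula is insensitive to the choice. The remaining work is the purely formal expansion above, a direct application of Grothendieck's definition of Chern classes together with the splitting principle.
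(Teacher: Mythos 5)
Your proof is correct and follows essentially the same route as the paper: the relative Euler sequence you write down is exactly the paper's sequence $1\to L^{-1}\to\pi^*E\to L^{-1}\otimes T\pi\to 1$ tensored by $L=\OO(1)$, and the splitting-principle expansion $\prod_i(1+y+a_i)=\sum_i \pi^*c_i(E_c)(1+y)^{2k+1-i}$ is the computation the paper leaves implicit in the phrase ``the claim follows.'' Your remark that the lines-versus-hyperplanes convention is immaterial here because $c_1(E_c)=0$ is a sensible extra check but not a departure from the paper's argument.
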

Note that $T\pi$ has complex rank $2k$, and so its top Chern class is $c_{2k}(T\pi)\in H^{4k}(X_c)$. The above polynomial
has a component also in degree $H^{4k+2}(X_c)$, but this component is $y^{2k+1}+cxy^{2k-1}$, which vanishes by Proposition~\ref{p:coho}.
\begin{proof}
Consider the exact sequence
$$
1\longrightarrow L^{-1}\longrightarrow \pi^*E\longrightarrow L^{-1}\otimes T\pi \longrightarrow 1\ ,
$$
where $L$ is the fiberwise hyperplane bundle on the total space. Tensoring with $L$, we conclude $c(T\pi)=c(L\otimes \pi^*E)$.
As $c(L)=1+y$ and $c(\pi^*E)=1+cx$, the claim follows.
\end{proof}
Let 
$$
c(T\pi)=(1+y)^{2k+1}+cx(1+y)^{2k-1} = (1+y)^{2k-1}(1+y_1)(1+y_2) 
$$
with $(1+y_1)(1+y_2) = 1+2y+y^2+cx$ a formal factorization in the sense of the splitting principle. Then the total
Pontryagin class of $X_c$ is given by
\begin{equation}\label{eq:tp}
p(TX_c)=p(T\pi) = (1+y^2)^{2k-1}(1+y_1^2)(1+y_2^2) \ .
\end{equation}
Any top-degree polynomial in the Pontryagin classes is obviously a linear combination of $y^{2k+2}$ and $cxy^{2k}$.
The relation $y^{2k+2}=-cxy^{2k}$ from Proposition~\ref{p:coho}, together with $\langle xy^{2k},[X_c]\rangle =1$ implies:
\begin{prop}\label{c:mult} 
All Pontryagin numbers of $X_c$ are universal multiples of $c \ $.
\end{prop}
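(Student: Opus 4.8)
The plan is to exploit two structural features already visible in the total Pontryagin class~\eqref{eq:tp}: the dependence on $c$ is linear and is always carried by the degree-four class $x$, while the relation $x^2=0$ collapses the cohomology in each degree to at most two monomials. Together these will force every top-degree Pontryagin polynomial into the form $A\,y^{2k+2}+c\,B\,xy^{2k}$ with $A,B$ \emph{independent of $c$}, after which the two relations of Proposition~\ref{p:coho} finish the job.

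First I would rewrite~\eqref{eq:tp} so as to isolate the $c$-dependence. From the formal factorization $(1+y_1)(1+y_2)=1+2y+y^2+cx$ one reads off $y_1+y_2=2y$ and $y_1y_2=y^2+cx$, and hence (using $x^2=0$) $y_1^2+y_2^2=2y^2-2cx$ and $y_1^2y_2^2=y^4+2cxy^2$. This gives
$$
(1+y_1^2)(1+y_2^2)=(1+y^2)^2+2cx(y^2-1) \ ,
$$
so that substituting into~\eqref{eq:tp} yields the clean separation
$$
p(TX_c)=(1+y^2)^{2k+1}+2cx(y^2-1)(1+y^2)^{2k-1} \ .
$$
In particular each individual Pontryagin class splits as $p_i=\al_i\,y^{2i}+c\,\be_i\,xy^{2i-2}$, where $\al_i,\be_i\in\Z$ depend only on $k$ and $i$ and not on $c$.

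Next I would expand an arbitrary top-degree monomial $p_{i_1}\cdots p_{i_r}$ with $i_1+\cdots+i_r=k+1$. Because $x^2=0$, any term involving two or more of the $x$-factors vanishes, so the product reduces to one term with no $x$ together with a sum of terms each carrying exactly one factor of $cx$; concretely $p_{i_1}\cdots p_{i_r}=A\,y^{2k+2}+c\,B\,xy^{2k}$ for universal integers $A=\prod_j\al_{i_j}$ and $B=\sum_m\be_{i_m}\prod_{j\neq m}\al_{i_j}$. This verifies the assertion made just before the proposition, that every top-degree polynomial in the Pontryagin classes is a linear combination of $y^{2k+2}$ and $cxy^{2k}$, and shows moreover that the coefficient of $xy^{2k}$ is always divisible by $c$. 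Finally, multiplying the relation $y^{2k+1}+cxy^{2k-1}=0$ by $y$ gives $y^{2k+2}=-cxy^{2k}$, so the product above equals $c(B-A)\,xy^{2k}$; evaluating against $[X_c]$ and using $\langle xy^{2k},[X_c]\rangle=1$ produces the Pontryagin number $c(B-A)$, a universal multiple of $c$.

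I do not expect a genuine obstacle here, since the argument is essentially bookkeeping once the separation of $c$-dependence is in place. The one point that must be handled with care is to confirm that no $c$-free contribution to the Pontryagin number survives: the summand $A\,y^{2k+2}$ looks as though it might contribute a term independent of $c$, but the relation $y^{2k+2}=-cxy^{2k}$ converts precisely this term into something proportional to $c$. It is this conversion, rather than any cancellation among the $\be_i$, that upgrades the answer from an affine function of $c$ to an honest universal multiple of $c$.
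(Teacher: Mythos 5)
Your proof is correct and follows essentially the same route as the paper, which justifies the proposition by the two sentences preceding it: every top-degree Pontryagin polynomial is a combination of $y^{2k+2}$ and $cxy^{2k}$ with the $c$-dependence carried entirely by the $x$-terms, and the relation $y^{2k+2}=-cxy^{2k}$ converts the remaining term into a multiple of $c$. You have merely made explicit the bookkeeping behind the paper's word ``obviously,'' and your separation $p(TX_c)=(1+y^2)^{2k+1}+2cx(y^2-1)(1+y^2)^{2k-1}$ is a clean way to do so.
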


As for precise values, for our proof of Theorem~\ref{t:main} we only need the non-vanishing of one 
particular Pontryagin number for each $X_c$ with $c\neq 0$.
Let $M$ be a closed oriented manifold of dimension $4n$ with total Pontryagin class
$$
p(TM)=\prod_i (1+y_i^2) \  .
$$
Then the number $s_n(M)$, introduced by Thom~\cite{T}, is defined by 
$$
s_n(M)  = \sum_i \langle y_i^{2n},[M]\rangle \ . 
$$
The splitting principle implies that this is a linear combination of Pontryagin numbers.

\begin{prop}\label{p:s}
The manifolds $X_c$ satisfy $s_{k+1}(X_c) = -(2k+1)(2k+3)\cdot c \ $. In particular $s_{k+1}(X_c)$ is non-zero as soon as $c$ is.
\end{prop}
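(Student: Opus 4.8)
The plan is to compute $s_{k+1}(X_c)$ directly from the factorization of the total Pontryagin class given in~\eqref{eq:tp}, using the additivity of the Thom invariant $s_n$ under the splitting. Writing $p(TX_c)=(1+y^2)^{2k-1}(1+y_1^2)(1+y_2^2)$, the Chern roots (in the Pontryagin sense) are the class $y$ repeated $2k-1$ times, together with $y_1$ and $y_2$. By the definition of $s_{k+1}$ as the sum of the $(k+1)$-st powers of these roots, we obtain
\begin{equation}
s_{k+1}(X_c) = (2k-1)\,y^{2k+2} + y_1^{2k+2} + y_2^{2k+2}\ ,\notag
\end{equation}
evaluated against the fundamental class $[X_c]$. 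So the first step is simply to assemble this expression; the content lies in evaluating the two symmetric-function terms $y_1^{2k+2}+y_2^{2k+2}$.

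Next I would express $y_1^{2k+2}+y_2^{2k+2}$ in terms of the elementary symmetric functions $e_1=y_1+y_2=2y$ and $e_2=y_1 y_2=y^2+cx$, which are read off from the formal factorization $(1+y_1)(1+y_2)=1+2y+y^2+cx$. The cleanest route is Newton's identity for power sums: setting $p_m=y_1^m+y_2^m$, one has $p_m=e_1 p_{m-1}-e_2 p_{m-2}$, and I would iterate this recursion up to $m=2k+2$. Because every top-degree monomial in $X_c$ reduces, via the relations $x^2=0$ and $y^{2k+2}=-cxy^{2k}$ from Proposition~\ref{p:coho}, to a multiple of $xy^{2k}$, I only need to track the coefficient of $xy^{2k}$ throughout — any term with a factor $x^2$ vanishes, and any pure power $y^{2k+2}$ is itself rewritten as $-cxy^{2k}$. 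This bookkeeping is where the constants $2k+1$ and $2k+3$ will emerge, and it is the main obstacle: one must carry the recursion carefully and keep only the linear-in-$c$ contribution, since by Proposition~\ref{c:mult} the answer is a universal multiple of $c$.

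Finally, after collecting the coefficient of $xy^{2k}$ from the three contributions and using $\langle xy^{2k},[X_c]\rangle=1$, I expect the $(2k-1)y^{2k+2}$ term to contribute $-(2k-1)c$ and the symmetric part $y_1^{2k+2}+y_2^{2k+2}$ to contribute the remainder, so that the total collapses to $-(2k+1)(2k+3)\,c$. As a sanity check independent of the full recursion, one can verify the product factorization $(2k+1)(2k+3)$ by noting that these two factors are precisely the exponents appearing when the binomial expansions of $(1+y)^{2k+1}$ and $(1+y)^{2k-1}$ in $c(T\pi)=(1+y)^{2k+1}+cx(1+y)^{2k-1}$ are converted to Pontryagin form; I would use this as a guide to confirm that no combinatorial factor has been dropped. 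The non-vanishing statement is then immediate, since $-(2k+1)(2k+3)\neq 0$ for every $k\geq 1$, so $s_{k+1}(X_c)\neq 0$ whenever $c\neq 0$.
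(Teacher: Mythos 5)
Your proposal follows essentially the same route as the paper: reduce $s_{k+1}(X_c)$ to evaluating $(2k-1)y^{2k+2}+y_1^{2k+2}+y_2^{2k+2}$ via the splitting \eqref{eq:tp}, compute the power sum $y_1^{2k+2}+y_2^{2k+2}$ from $e_1=2y$, $e_2=y^2+cx$, and then apply the relations $x^2=0$ and $y^{2k+2}=-cxy^{2k}$. The only piece you leave unexecuted is the actual extraction of the coefficient of $cxy^{2k}$ from the Newton recursion; the paper packages this as the closed-form identity $y_1^{2n+2}+y_2^{2n+2}=2y^{2n+2}-2c(n+1)(2n+1)xy^{2n}$, proved by induction using exactly the recursion you name, after which the total $-(2k-1)-2-2(k+1)(2k+1)=-(2k+1)(2k+3)$ falls out.
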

This is in fact a special case of a result of Borel and Hirzebruch~\cite[Lemma 28.2]{BH}.
\begin{proof}
Since the tangent bundle of $S^4$ is stably trivial, we have $p(TX_c)=p(T\pi)$, and the latter is given by~\eqref{eq:tp}.
To calculate $s_{k+1}(X_c)$ we have to evaluate $(2k-1)y^{2k+2}+y_1^{2k+2}+y_2^{2k+2}$ on the fundamental class of $X_c$.
For this we need the following formula, which is easily proved by induction on $n$ using the defining equations $y_1+y_2=2y$ and 
$y_1y_2=y^2+cx$:
\begin{equation}\label{eq:lem}
y_1^{2n+2}+y_2^{2n+2} = 2y^{2n+2}-2c(n+1)(2n+1)xy^{2n} \ .
\end{equation}
Using this formula with $n=k$ and recalling the relation $y^{2k+2}=-cxy^{2k}$ in the cohomology ring of $X_c$ we find
$$
(2k-1)y^{2k+2}+y_1^{2k+2}+y_2^{2k+2}  = -(2k+1)(2k+3) \cdot c \cdot xy^{2k} \ .
$$
This completes the proof.
\end{proof}

We can now prove Theorem~\ref{t:main} using cobordism theory. This theory implies that the signature equals some linear 
combination of Pontryagin numbers. In dimension $4i$ let $L_i$ be this linear combination.

\begin{proof}[Proof of Theorem~\ref{t:main}]
It was proved by Thom~\cite{T} that the rational oriented cobordism ring $\Omega_{\star}\otimes\Q$ is a polynomial ring with one generator
$\alpha_i$ in dimension $4i$ for $i=1,2,\ldots$. Thom also showed that the cobordism class of a closed oriented $(4i)$-manifold $M$ can be taken as a 
generator $\alpha_i$ if and only if $s_i(M)\neq 0$. Thus, using Proposition~\ref{p:s}, we may take as generators the following cobordism classes:
$\alpha_1 = [\C P^2]$ and $\alpha_i=[X_c]$ with $X_c$ a $\C P^{2i-2}$-bundle over $S^4$ for $i\geq 2$, with any $c\neq 0$.

As there is nothing to prove in dimension $4$, where the only Pontryagin number is a multiple of the signature, we assume now that
$i\geq 2$. A $\Q$-vector space basis for $\Omega_{4i}\otimes\Q$ is given by the elements $\alpha_I = \alpha_{i_1}\cdot\ldots\cdot\alpha_{i_l}$,
where $I=(i_1,\ldots,i_l)$ ranges over all partitions of $i$. Among these basis vectors there is one,
namely $\alpha_{(1,\ldots,1)}=[\C P^2\times\ldots\times\C P^2]$, with non-zero signature. All the other basis vectors are in the codimension one 
subspace $\ker (L_i)\subset\Omega_{4i}\otimes\Q$, because each of them contains at least one $X_c$, and these have vanishing signature
by Proposition~\ref{p:coho}. (Recall that the signature is multiplicative in products by the K\"unneth formula.)
Thus the $\alpha_I$ with $I\neq (1,\ldots,1)$ form a vector space basis for $\ker (L_i)$.

Let $f\colon \Omega_{4i}\otimes\Q\longrightarrow\Q$ be any linear combination of Pontryagin numbers. If $f$ is not a multiple of $L_i$, then
$\ker (f)\cap\ker (L_i)$ is a proper subspace of $\ker (L_i)$. It follows that one of the $\alpha_I$ with $I\neq (1,\ldots,1)$ is not in $\ker (f)$, i.e.
$f(\alpha_I)\neq 0$ for this particular $I$. 

If $I=(i_1,\ldots,i_l)$, let $n>0$ be the number of $i_j$ which are $\neq 1$. By Proposition~\ref{c:mult}, $f(\alpha_I)$, which is a linear
combination of products of Pontryagin numbers of the factors $\alpha_{i_j}$, is a multiple of $c^n$. 
It is a non-zero multiple, and is therefore unbounded as we vary $c$. But the corresponding product of copies of $\C P^2$ and the various 
$X_c$ is connected and has a metric of nonnegative sectional curvature by Theorem~\ref{t:GZ}. This completes the proof.
\end{proof}
This proof also yields yet another characterization of the signature:
\begin{cor}
Any linear combination of Pontryagin numbers that vanishes on the total spaces of $\C P^{2k}$-bundles over $S^4$ is a multiple of the signature.
\end{cor}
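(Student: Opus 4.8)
The plan is to recognise the statement as the annihilator reformulation of the computation already carried out for Theorem~\ref{t:main}, so that almost all of the work is done. Fix the dimension $4i$ (there is nothing to prove for $i=1$, where the only Pontryagin number is already a multiple of the signature), and regard a linear combination of Pontryagin numbers as a linear functional $f$ on $\Omega_{4i}\otimes\Q$. The signature is the functional $L_i$, and since $L_i\neq 0$ its kernel is a hyperplane; ``$f$ is a multiple of the signature'' therefore means precisely that $f$ annihilates the codimension-one subspace $\ker L_i$. Thus the corollary is equivalent to the assertion $\ker L_i\subseteq\ker f$, and I would aim to produce a spanning set of $\ker L_i$ all of whose members are the total spaces in question.

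Such a spanning set is furnished directly by the proof of Theorem~\ref{t:main}. There the classes $\alpha_I=\alpha_{i_1}\cdots\alpha_{i_l}$, with $I$ running over partitions of $i$, are shown to form a basis of $\Omega_{4i}\otimes\Q$ (Thom), and those with $I\neq(1,\dots,1)$ are exactly the ones lying in $\ker L_i$, each being represented by a product of copies of $\C P^2$ with at least one bundle $X_c$ (a factor $\alpha_{i_j}=[X_c]$ occurs whenever $i_j\geq 2$). Being one fewer in number than the full set of partitions, they span the hyperplane $\ker L_i$. Hence, if $f$ vanishes on all of these total spaces, it kills a spanning set of $\ker L_i$, so $\ker L_i\subseteq\ker f$ and $f\in\Q\cdot L_i$ as required. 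The reverse implication, which makes this a genuine characterisation, is immediate: for $f=\la L_i$ the value on any manifold is $\la$ times its signature, and the signature of every $X_c$---hence of every product containing such a factor, by the K\"unneth formula---vanishes by Proposition~\ref{p:coho}.

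The one point I would flag, and the only place any thought is needed, is that the genuine single bundles $X_c$ do not by themselves suffice: by Proposition~\ref{c:mult} all their Pontryagin numbers are equal to $c$ times universal constants, so in $\Omega_{4i}\otimes\Q$ the classes $[X_c]$ are mutually proportional and span only the line $\Q\cdot\alpha_{(i)}$, imposing a single linear condition on $f$. It is therefore essential to use the products $\alpha_I$ with $I\neq(1,\dots,1)$, which are precisely the mixed products of $\C P^2$'s and $X_c$'s built in the proof of Theorem~\ref{t:main}, to fill out the entire hyperplane $\ker L_i$. Once this spanning is acknowledged the corollary follows with no further computation, the substantive content having already been established in proving Theorem~\ref{t:main}.
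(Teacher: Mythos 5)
Your argument is correct and is precisely the one the paper intends: the corollary is presented as an immediate byproduct of the proof of Theorem~\ref{t:main}, namely that the classes $\alpha_I$ with $I\neq(1,\dots,1)$ span the hyperplane $\ker L_i$, so any functional killing their representing manifolds is forced into $\Q\cdot L_i$. The caveat you flag at the end deserves emphasis, because it is not a cosmetic point: taken literally, vanishing on the total spaces $X_c$ alone imposes only \emph{one} linear condition in each dimension $4i$ (by Proposition~\ref{c:mult} all Pontryagin numbers of $X_c$ are universal multiples of $c$, so the $[X_c]$ span a single line in $\Omega_{4i}\otimes\Q$), and for $i\geq 3$ one condition cannot cut the $p(i)$-dimensional space of Pontryagin numbers down to the line $\Q\cdot L_i$. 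Concretely, in dimension $12$ the combination $p_1p_2-9p_3$ vanishes on every $\C P^4$-bundle over $S^4$, whose numbers are $(-275c,-90c,-10c)$, yet takes the value $9c$ on $\C P^2\times X_c$ with numbers $(-189c,-72c,-9c)$, so it is not a multiple of $L_3$; the paper itself implicitly acknowledges this in its final section, where \emph{both} of these examples are needed to pin down $L_3$. So the hypothesis that actually suffices --- and the one your proof correctly uses --- is vanishing on the products of complex projective planes with such bundles, i.e.\ on the basis manifolds representing the $\alpha_I$ with $I\neq(1,\dots,1)$, not merely on the bundles themselves. With that reading your proof is complete and coincides with the paper's.
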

This is related to the characterization of the signature by its multiplicativity property for fiber bundles due to Borel and Hirzebruch~\cite[Theorem 28.4]{BH}.
The vanishing for $\C P^{2k}$-bundles over $S^4$ is a very special instance of the mutiplicativity. Note that we did not use the signature theorem, or the 
multiplicativity of the signature, other than for the bundles for which we proved it directly in Proposition~\ref{p:coho}.

\section{Relation with the signature theorem}

As we mentioned earlier, it follows from Thom's results~\cite{T} that the signature of a closed oriented $(4i)$-manifold is a linear combination of Pontryagin 
numbers; see Hirzebruch~\cite{TMAG}. The Hirzebruch signature theorem is a much more precise statement, in that it identifies this linear combination $L_i$
as the degree $i$ component of the $L$-genus associated with the power series $Q(x)=x/tanh(x)$; see again~\cite{TMAG}.

The arguments of this paper are independent of the Hirzebruch signature theorem. We have constructed vector space bases for the kernel of the 
signature in $\Omega_{4i}\otimes\Q$ depending on a non-zero integer $c$. In fact, we could introduce several constants, a different one for each $i\geq 2$,
but it is not clear whether this is useful. In any case, the signature is, up to normalization, the only linear combination of Pontryagin numbers which vanishes
on our explicit basis manifolds. Therefore, we can use these bases to determine $L_i$, without using the Hirzebruch signature theorem.
We shall illustrate this in small dimensions, by carrying out some explicit calculations of Pontryagin numbers for the bundles $X_c$, and show how one can find 
the $L_i$ from this.

\begin{lem}\label{l:pclasses}
The Pontryagin classes of a $\C P^{2k}$-bundle $X_c$ over $S^4$ are given by:
\begin{alignat*}{1}
p_1(X_c) &= (2k+1)y^2-2cx \ ,\\
p_2(X_c) &=k(2k+1)y^4-4c(k-1)xy^2 \ ,\\
&\cdots\\
p_{k+1}(X_c) &= {2k+1 \choose k+1}y^{2k+2} \ .
\end{alignat*}
\end{lem}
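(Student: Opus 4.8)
The plan is to compute the total Pontryagin class from the factorization in~\eqref{eq:tp} and extract each graded piece, using the cohomology relations of Proposition~\ref{p:coho} to reduce everything to the monomials $y^{2j}$ and $cxy^{2j-2}$. Recall from the splitting principle that $p(TX_c)=(1+y^2)^{2k-1}(1+y_1^2)(1+y_2^2)$, where the formal roots satisfy $y_1+y_2=2y$ and $y_1y_2=y^2+cx$. The key observation is that the symmetric functions $e_1'=y_1^2+y_2^2$ and $e_2'=y_1^2 y_2^2$ of the squared roots are directly computable: $y_1^2+y_2^2=(y_1+y_2)^2-2y_1y_2=4y^2-2(y^2+cx)=2y^2-2cx$, and $y_1^2 y_2^2=(y_1y_2)^2=(y^2+cx)^2=y^4+2cxy^2$ (the $c^2x^2$ term vanishes since $x^2=0$). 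Thus $(1+y_1^2)(1+y_2^2)=1+(2y^2-2cx)+(y^4+2cxy^2)$, a completely explicit quadratic in the graded pieces.

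Next I would multiply this by $(1+y^2)^{2k-1}$ and collect terms by degree. Writing $(1+y^2)^{2k-1}=\sum_j \binom{2k-1}{j}y^{2j}$, the coefficient of $y^{2m}$ and of $cxy^{2m-2}$ in the product $p(TX_c)$ can be read off directly. For $p_1$, the degree-$4$ part, I expect $\binom{2k-1}{1}y^2+(2y^2-2cx)=(2k+1)y^2-2cx$, matching the claim. For $p_2$, the degree-$8$ part combines $\binom{2k-1}{2}y^4$, the product of $\binom{2k-1}{1}y^2$ with $(2y^2-2cx)$, and the $(y^4+2cxy^2)$ term; collecting these should give $k(2k+1)y^4$ for the $y^4$ coefficient and $-4c(k-1)xy^2$ for the $cxy^2$ coefficient. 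The general $p_j$ follows the same pattern, so I would state a general formula $p_j(X_c)=\binom{2k+1}{j}y^{2j}+(\text{explicit multiple of } c)\,xy^{2j-2}$ and verify the top case $p_{k+1}$ separately.

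For the top class $p_{k+1}\in H^{4k+4}(X_c)$, the $y^{2k+2}$ coefficient is what the lemma asserts to be $\binom{2k+1}{k+1}$, while any $cxy^{2k}$ term is irrelevant as a standalone claim but in fact gets absorbed via the relation $y^{2k+2}=-cxy^{2k}$ from Proposition~\ref{p:coho}. The cleanest route is to note that $p(TX_c)$ as a formal polynomial has a natural top piece, and that the binomial coefficient of $y^{2k+2}$ in the full expansion is precisely the coefficient of $z^{k+1}$ in $(1+z)^{2k-1}(1+z)^2=(1+z)^{2k+1}$ after the substitution $y_i^2\mapsto z$ in leading order---that is, $\binom{2k+1}{k+1}$. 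This reflects the fact that the formal factorization recombines as $(1+y^2)^{2k-1}(1+y_1^2)(1+y_2^2)$, whose total-degree symmetry mimics $(1+y^2)^{2k+1}$ modulo the $cx$-corrections.

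The main obstacle is purely bookkeeping: keeping the two families of monomials $y^{2m}$ and $cxy^{2m-2}$ separate through the convolution with the binomial coefficients of $(1+y^2)^{2k-1}$, and correctly tracking the $cx$-linear contributions, which arise from pairing the $-2cx$ and $+2cxy^2$ terms of the quadratic factor against the various powers $y^{2j}$. Since $x^2=0$ kills all quadratic-in-$c$ contributions, each $p_j$ is at most linear in $c$, so no genuine difficulty arises beyond careful index arithmetic; I would present the first two cases explicitly and indicate the inductive pattern for the remainder, with the top class handled by the $(1+y^2)^{2k+1}$ observation above.
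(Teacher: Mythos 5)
Your proposal is correct and follows essentially the same route as the paper: both expand $(1+y^2)^{2k-1}(1+y_1^2)(1+y_2^2)$ via the elementary symmetric functions $y_1^2+y_2^2=2y^2-2cx$ and $y_1^2y_2^2=y^4+2cxy^2$ (using $x^2=0$) and collect graded pieces, and your $p_1$ and $p_2$ computations match the paper's line for line. The one soft spot is your treatment of $p_{k+1}$: the relation $y^{2k+2}=-cxy^{2k}$ does \emph{not} render a $cxy^{2k}$ term harmless --- a contribution $\beta\, cxy^{2k}$ would shift the coefficient of $y^{2k+2}$ by $-\beta$ --- so your ``leading order'' substitution $y_i^2\mapsto z$ gives the stated coefficient only because the $cxy^{2k}$ contributions, namely $-2{2k-1 \choose k}cxy^{2k}$ from the linear term and $+2{2k-1 \choose k-1}cxy^{2k}$ from the quadratic term, cancel exactly since ${2k-1 \choose k}={2k-1 \choose k-1}$. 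The paper's proof carries out precisely this cancellation before summing the binomial coefficients to ${2k+1 \choose k+1}$; with that one check added, your argument is complete.
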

\begin{proof}
Given that the total Pontryagin class is
$$
p(TX_c) = (1+y^2)^{2k-1}(1+y_1^2)(1+y_2^2) 
$$
with $y_1+y_2=2y$ and $y_1y_2=y^2+cx$, we can calculate:
$$
p_1(X_c)=(2k-1)y^2+y_1^2+y_2^2=(2k-1)y^2+(y_1+y_2)^2-2y_1y_2 = (2k+1)y^2-2cx \ .
$$
Similarly
$$
p_2(X_c)={2k-1 \choose 2}y^4+(2k-1)y^2(y_1^2+y_2^2)+y_1^2y_2^2=\ldots = k(2k+1)y^4-4c(k-1)xy^2 \ ,
$$
and 
\begin{alignat*}{1}
p_{k+1}(X_c) &= {2k-1 \choose k+1}y^{2k+2}+{2k-1 \choose k}y^{2k}(y_1^2+y_2^2)+{2k-1 \choose k-1}y^{2k-2}(y_1y_2)^2\\
&={2k-1 \choose k+1}y^{2k+2}+{2k-1 \choose k}y^{2k}(2y^2-2cx)+{2k-1 \choose k-1}y^{2k-2}(y^2+cx)^2\\
&= \left( {2k-1 \choose k+1}+2{2k-1 \choose k}+{2k-1 \choose k-1}\right) y^{2k+2}\\
&= {2k+1 \choose k+1}y^{2k+2} \ .
\end{alignat*}
Here we have freely used the relations from Proposition~\ref{p:coho} in the calculations.
\end{proof}
As a consequence we can calculate Pontryagin numbers of $X_c$ explicitly.
\begin{lem}\label{l:p1}
$\langle p_1^{k+1}(TX_c),[X_c]\rangle = -(4k+3)(2k+1)^{k}\cdot c$
\end{lem}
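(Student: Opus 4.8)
The plan is to compute the Pontryagin number $\langle p_1^{k+1}(TX_c),[X_c]\rangle$ directly from the explicit formula $p_1(X_c)=(2k+1)y^2-2cx$ established in Lemma~\ref{l:pclasses}, using the two pieces of intersection-theoretic data supplied by Proposition~\ref{p:coho}: the relation $y^{2k+2}=-cxy^{2k}$ and the normalization $\langle xy^{2k},[X_c]\rangle=1$. Since $p_1$ is a combination of $y^2$ and $x$, its $(k+1)$-st power is a sum of terms $y^{2j}x^{k+1-j}$, and because $x^2=0$ only the terms with $k+1-j\in\{0,1\}$ survive. Thus the whole calculation reduces to isolating two coefficients in a binomial expansion.

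Concretely, I would write $p_1^{k+1}=\bigl((2k+1)y^2-2cx\bigr)^{k+1}$ and expand by the binomial theorem. The term with no factor of $x$ is $(2k+1)^{k+1}y^{2k+2}$; applying the relation $y^{2k+2}=-cxy^{2k}$ converts it to $-(2k+1)^{k+1}c\,xy^{2k}$. The term linear in $x$ is $\binom{k+1}{1}(2k+1)^{k}y^{2k}(-2cx)=-2(k+1)(2k+1)^{k}c\,xy^{2k}$; all higher powers of $x$ vanish since $x^2=0$. Adding these and evaluating against $[X_c]$ via $\langle xy^{2k},[X_c]\rangle=1$ gives
\begin{equation*}
\langle p_1^{k+1},[X_c]\rangle = -(2k+1)^{k}c\bigl[(2k+1)+2(k+1)\bigr] = -(4k+3)(2k+1)^{k}\cdot c \ ,
\end{equation*}
which is exactly the claimed value.

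There is no serious obstacle here; the statement is an elementary consequence of Lemma~\ref{l:pclasses} together with the cohomology relations. The only point requiring the slightest care is the bookkeeping: one must remember that $x^2=0$ kills every term of the expansion containing $x$ to a power $\geq 2$, so that precisely the two lowest-order-in-$x$ terms contribute, and one must apply the relation $y^{2k+2}=-cxy^{2k}$ to reduce the $x$-free term to a multiple of the generator $xy^{2k}$. Once this is observed, the result falls out of the two-term computation above, consistent with Proposition~\ref{c:mult}, which already guarantees that the answer is a universal multiple of $c$.
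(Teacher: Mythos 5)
Your proof is correct and follows essentially the same route as the paper: expand $\bigl((2k+1)y^2-2cx\bigr)^{k+1}$ binomially, discard terms with $x^2=0$, reduce $y^{2k+2}$ to $-cxy^{2k}$, and evaluate. The arithmetic $(2k+1)^{k}\bigl[(2k+1)+2(k+1)\bigr]=(4k+3)(2k+1)^{k}$ matches the paper's computation exactly.
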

\begin{proof}
Given $p_1(TX_c)=p_1(T\pi)=(2k+1)y^2 -2cx$, we compute using the relations in the cohomology ring from Proposition~\ref{p:coho}:
\begin{alignat*}{1}
p_1^{k+1}(TX_c) &=((2k+1)y^2 -2cx)^{k+1} = (2k+1)^{2k+2}y^{2k+2} - 2(k+1)(2k+1)^kcxy^{2k} \\
&= -(2k+1)^{k+1}cxy^{2k}  - 2(k+1)(2k+1)^kcxy^{2k} = -(4k+3)(2k+1)^{k} cxy^{2k} \ .
\end{alignat*}
The result follows since $\langle xy^{2k},[X_c]\rangle =1$.
\end{proof}
\begin{ex}
Setting $k=1$ in Lemma~\ref{l:p1}, we find $\langle p_1^2(TX_c),[X_c]\rangle = -21c$ for $\C P^2$-bundles over $S^4$. This was first
calculated by Dessai and Tuschmann~\cite[Proposition~2.2]{DT} using Lie theory.
\end{ex}

\begin{lem}\label{l:pk+1}
$\langle p_{k+1}(TX_c),[X_c]\rangle = - {2k+1 \choose k+1} \cdot c$
\end{lem}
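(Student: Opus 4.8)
The plan is to reduce everything to the top Pontryagin class already computed in \lemref{l:pclasses}, where it was shown that
$$
p_{k+1}(X_c) = {2k+1 \choose k+1} y^{2k+2} \ .
$$
Thus the only genuine task is to evaluate the single monomial $y^{2k+2}$ on the fundamental class $[X_c]$; the binomial coefficient ${2k+1 \choose k+1}$ then comes along for free as an overall scalar.

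First I would recall the defining relation of the cohomology ring from Proposition~\ref{p:coho}, namely $y^{2k+1}+cxy^{2k-1}=0$. Multiplying this relation by $y$ gives the single relation I actually need,
$$
y^{2k+2} = -c\,xy^{2k} \ .
$$
Next I would invoke the normalization $\langle xy^{2k},[X_c]\rangle = 1$, also recorded in Proposition~\ref{p:coho}, to conclude that $\langle y^{2k+2},[X_c]\rangle = -c$.

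Combining these two ingredients yields
$$
\langle p_{k+1}(TX_c),[X_c]\rangle = {2k+1 \choose k+1}\langle y^{2k+2},[X_c]\rangle = -{2k+1 \choose k+1}\cdot c \ ,
$$
which is exactly the claim. There is no real obstacle here: both the closed formula for $p_{k+1}$ and the cohomological relations have already been assembled in the preceding results, so the argument is essentially a one-line substitution. The only point worth double-checking is bookkeeping consistency, namely that the same orientation convention for $[X_c]$ (the one fixing $\langle xy^{2k},[X_c]\rangle = 1$) is used in \lemref{l:pclasses} and in Proposition~\ref{p:coho}, so that the sign of $c$ emerges correctly rather than with the opposite sign.
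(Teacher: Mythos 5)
Your proof is correct and is essentially identical to the paper's own argument: substitute the formula $p_{k+1}(X_c)={2k+1 \choose k+1}y^{2k+2}$ from Lemma~\ref{l:pclasses}, apply the relation $y^{2k+2}=-cxy^{2k}$, and evaluate against $\langle xy^{2k},[X_c]\rangle=1$. The only nitpick is attribution: the normalization $\langle xy^{2k},[X_c]\rangle=1$ is stated in the text just before Proposition~\ref{c:mult} rather than inside Proposition~\ref{p:coho}, but this does not affect the argument.
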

\begin{proof}
Using the formula for $p_{k+1}(X_c)$ from Lemma~\ref{l:pclasses}, we have
$$
\langle p_{k+1}(X_c),[X_c]\rangle = {2k+1 \choose k+1} \langle y^{2k+2},[X_c]\rangle =-{2k+1 \choose k+1} \langle cxy^{2k},[X_c]\rangle = - {2k+1 \choose k+1} \cdot c \ .
$$
\end{proof}
Setting $k=1$ in this Lemma we find the second Pontryagin number $\langle p_2(TX_c),[X_c]\rangle = -3c$ for $\C P^2$-bundles over $S^4$. Since such a bundle 
has vanishing signature by Proposition~\ref{p:coho}, our calculations of $p_1^2$ and $p_2$ show that the signature must be a rational multiple of $7p_2-p_1^2$. 
Which multiple it is can be determined by computing for example for $\C P^2\times \C P^2$, which has signature one. Thus one finds the well known formula
$$
L_2 = \frac{1}{45}(7p_2-p_1^2) \ .
$$

To determine $L_3$ in dimension $12$ we need one more calculation.
\begin{lem}\label{l:p1p2}
$\langle p_{1}^{k-1}p_2(TX_c),[X_c]\rangle = - (2k+1)^{k-1}(4k^2+3k-4) \cdot c$
\end{lem}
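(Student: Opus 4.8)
The plan is to compute $\langle p_1^{k-1}p_2(TX_c),[X_c]\rangle$ directly by multiplying the explicit formulas for $p_1$ and $p_2$ already established in \lemref{l:pclasses}, then reducing the resulting top-degree monomial using the cohomology relations from \prop{p:coho}. Concretely, I would start from
\begin{equation*}
p_1(X_c) = (2k+1)y^2 - 2cx \quad\text{and}\quad p_2(X_c) = k(2k+1)y^4 - 4c(k-1)xy^2 \ ,
\end{equation*}
and expand the product $p_1^{k-1}p_2$. Since every term lives in $H^{4k+4}(X_c)=H^{4(k+1)}(X_c)$, which is spanned by $y^{2k+2}$ and $xy^{2k}$, and since $x^2=0$, only terms linear in $x$ survive evaluation (after using $y^{2k+2}=-cxy^{2k}$ to convert the pure $y$-power into another multiple of $xy^{2k}$).

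First I would raise $p_1$ to the $(k-1)$-st power by the binomial theorem, keeping only the two lowest $x$-orders: because $x^2=0$, we have
\begin{equation*}
p_1^{k-1} = \bigl((2k+1)y^2\bigr)^{k-1} - (k-1)\bigl((2k+1)y^2\bigr)^{k-2}\cdot 2cx = (2k+1)^{k-1}y^{2k-2} - 2c(k-1)(2k+1)^{k-2}xy^{2k-4}\ .
\end{equation*}
Next I would multiply this by $p_2 = k(2k+1)y^4 - 4c(k-1)xy^2$, again discarding any $x^2$ term. This produces a contribution of the form $A\,y^{2k+2} + B\,c\,xy^{2k}$, where $A$ comes from the pure-$y$ product and $B$ collects the two cross terms (one $x$ from $p_1^{k-1}$ times the $y^4$ part of $p_2$, and the $y^{2k-2}$ part of $p_1^{k-1}$ times the $x$-term of $p_2$).

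Finally I would apply the relation $y^{2k+2} = -cxy^{2k}$ from \prop{p:coho} to rewrite $A\,y^{2k+2}$ as $-Ac\,xy^{2k}$, combine with the $Bc\,xy^{2k}$ term, and evaluate on $[X_c]$ using $\langle xy^{2k},[X_c]\rangle = 1$ to obtain $-(A-B)c$. The main obstacle is purely bookkeeping: I expect $A = k(2k+1)^{k}$ and the cross-term coefficient to contribute $B = -4(k-1)(2k+1)^{k-1} - 4k(k-1)(2k+1)^{k-2}\cdot(2k+1)$, and the claimed answer $-(2k+1)^{k-1}(4k^2+3k-4)c$ amounts to checking that $A - B$ factors as $(2k+1)^{k-1}(4k^2+3k-4)$. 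The one subtlety to watch is the edge case $k=1$, where $p_1^{k-1}=1$ and the formula should reduce to $\langle p_2(TX_c),[X_c]\rangle$, providing a consistency check against \lemref{l:pk+1}; I would verify that $4k^2+3k-4$ evaluates to $3$ at $k=1$, matching the coefficient $\binom{3}{2}=3$ found there.
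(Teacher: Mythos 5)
Your method is exactly the one the paper intends: the paper omits the proof of this lemma, saying only that it is a straightforward calculation using the formulae for $p_1$ and $p_2$ from Lemma~\ref{l:pclasses} and the relations in the cohomology ring from Proposition~\ref{p:coho}, which is precisely your plan. Your expansion of $p_1^{k-1}$ (keeping only the terms of $x$-degree $0$ and $1$), the reduction via $x^2=0$ and $y^{2k+2}=-cxy^{2k}$, and the evaluation using $\langle xy^{2k},[X_c]\rangle=1$ are all correct, as is your value $A=k(2k+1)^k$.

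However, your anticipated cross-term coefficient $B$ contains a factor-of-two slip that would make your final factorization check fail. The second cross term is
$$
\bigl(-2c(k-1)(2k+1)^{k-2}xy^{2k-4}\bigr)\cdot\bigl(k(2k+1)y^4\bigr) = -2k(k-1)(2k+1)^{k-1}\,cxy^{2k}\ ,
$$
so the correct value is $B=-4(k-1)(2k+1)^{k-1}-2k(k-1)(2k+1)^{k-1}$, not $-4(k-1)(2k+1)^{k-1}-4k(k-1)(2k+1)^{k-1}$ as you wrote. With your $B$ one would get $A-B=(2k+1)^{k-1}(6k^2+k-4)$, which disagrees with the statement already at $k=2$ (it gives $22$ instead of $18$); with the corrected $B$,
$$
A-B=(2k+1)^{k-1}\bigl(k(2k+1)+4(k-1)+2k(k-1)\bigr)=(2k+1)^{k-1}(4k^2+3k-4)\ ,
$$
as required. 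Note that the consistency check you propose at $k=1$ cannot detect this error, since the offending term carries a factor of $k-1$ and vanishes there; checking against the $\C P^4$-bundle example in the paper ($k=2$, $p_1p_2=-90c$) would have caught it.
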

Again the proof, which we omit, is a straightforward calculation using the formulae for $p_1$ and $p_2$ from Lemma~\ref{l:pclasses} and the relations in the 
cohomology ring from Proposition~\ref{p:coho}.

\begin{ex}
Using the last three Lemmata, a $\C P^{4}$-bundle $X_c$ over $S^4$ has Pontryagin numbers $p_1^3=-275c$, $p_1p_2=-90c$ and $p_3=-10c$.
\end{ex}
\begin{ex}
If we take the product of $\C P^2$ with a $\C P^2$-bundle over $S^4$, then using the Whitney sum formula for the 
Pontryagin classes of the tangent bundle and our calculation for the second factor, we find $p_1^3=-189c$, $p_1p_2=-72c$ and $p_3=-9c$.
\end{ex}
Now the only linear combinations of Pontryagin numbers that vanish for both of these examples are the multiples of $62p_3-13p_1p_2+2p_1^3$. Evaluating
this on $\C P^2\times\C P^2\times\C P^2$, which has signature one, gives
$$
L_3=\frac{1}{945}(62p_3-13p_1p_2+2p_1^3) \ .
$$

One can proceed into arbitrarily high dimensions by implementing the calculations involving~\eqref{eq:tp} and the relations from Proposition~\ref{p:coho} 
in an algorithm involving symbolic computation. The proof of Theorem~\ref{t:main} shows that the algorithm will determine $L_i$ unfailingly. For each dimension
the algorithm ends by solving a system of linear equations, and we have proved that the solution exists and is unique.

\bibliographystyle{amsplain}

\bigskip

\end{document}